\DeclareMathOperator{\diag}{diag}
\subjclass{91C20, 91D25, 94C15}
\keywords{Money exchange, social network, equal wealth, Fermi function}
\title{Application of the Fermi function in\\ money exchange}
\author{Hsin-Lun Li}
\date{}
\email{hsinlunl@asu.edu}
\theoremstyle{definition}
\newtheorem{theorem}{Theorem}
\newtheorem{lemma}[theorem]{Lemma}
\begin{document}

\allowdisplaybreaks

\thispagestyle{firstpage}
\maketitle
\begin{center}
    Hsin-Lun Li
    \centerline{$^1$National Sun Yat-sen University, Kaohsiung 804, Taiwan}
\end{center}
\medskip

\begin{abstract}
    In a money exchange process involving a seller and a buyer, we develop a straightforward model encompassing conservative, non-conservative, and systems with or without debt. Our model integrates the Fermi function to capture the behavior of buyers and sellers. Under certain circumstances, we identify an equation that marks the phase transition between a stable equal wealth state across all connected social graphs and an unstable equal wealth state in some connected social graph.
\end{abstract}

\section{Introduction}
A money exchange involves two parties: a seller and a buyer. The buyer pays the seller for goods or services, resulting in the seller's wealth increasing by the same amount that the buyer's wealth decreases. Money exchange models are typically classified as either conservative or non-conservative, and may also involve debt or not~\cite{yakovenko2009colloquium,lanchier2017rigorous,lanchier2018rigorous,lanchier2019rigorous}. Some models in opinion dynamics, such as the Deffuant model with attraction and repulsion, can be viewed as money exchange models~\cite{castellano2009statistical,lanchier2020probability,MR3069370,mHK,mHK2}. In this paper, we introduce a simple money exchange model, called Model T, that can represent both conservative and non-conservative systems, as well as systems with or without debt. Consider a simple undirected social graph \( G = ([n], E) \), where the vertex set \( [n] = \{1, \ldots, n\} \) represents a collection of agents, and each edge in the edge set \( E \) represents a social relationship. Two agents can transact if they are socially connected by an edge in the graph. $N_i= \{j \in [n] : (i,j) \in E\}$ denotes the collection of all social neighbors of individual $i$. The degree of vertex $i$ in graph $G$ at time $t$ is equal to the cardinality of $N_i$. \emph{Equal wealth} is achievable if all individuals' wealth approaches the same value over time. The Laplacian of \( G \), denoted as \( \mathscr{L} \), is defined as \( \mathscr{L} = \diag(|N_1|, \ldots, |N_n|) - A \), where \( A_{ij} = \mathbbm{1}\{(i,j) \in E\} \). 

The mechanism of Model T operates as follows: At each time step, a pair of socially connected agents, say agents \( i \) and \( j \), is uniformly selected to transact. The probability that agents \( i \) and \( j \) will act as seller and buyer, respectively, is defined by the Fermi function:
\[
Q_{ij}(t) = \frac{1}{1 + \exp\left[-\eta_t(w_i(t) - w_j(t))\right]},
\]
where \( \eta_t \in \mathbb{R} \) and \( w_i(t) \in \mathbb{R} \) is the wealth of agent \( i \) at time \( t \). The update of an individual's wealth for all \( i \in [n] \) is given by:
\[
w_i(t+1) = \begin{cases}
   \mu_i(t) w_i(t) & \text{if agent } i \text{ does not transact at time } t, \\
   \mu_i(t) (w_i(t) + k_t) & \text{if agent } i \text{ is the seller at time } t, \\
   \mu_i(t) (w_i(t) - k_t) & \text{if agent } i \text{ is the buyer at time } t,
\end{cases}
\]
where \( k_t \geq 0 \) is the transaction amount at time \( t \), and \( \mu_i(t) \in \mathbb{R} \) is a configuration influencing the wealth of agent \( i \) at time \( t \). For instance, a typhoon decreases local farmers' wealth, while winning a lottery increases the lottery winner's wealth. $w_i(t)<0$ indicates that agent $i$ is in debt at time $t.$ There are many factors affecting a person's wealth. Instead of listing all the factors, we can update individual \( i \)'s wealth with or without a transaction by multiplying it by \( \mu_i \).

Notice that \( Q_{ij}(t) + Q_{ji}(t) = 1 \), meaning that for any pair of socially connected agents \( i \) and \( j \), they will either be seller and buyer or buyer and seller, respectively. The role of \( \eta_t \) acts as follows:
\begin{itemize}
    \item \( \eta_t > 0 \) indicates that the wealthier agent is likelier to be a seller,
    \item \( \eta_t < 0 \) indicates that the wealthier agent is likelier to be a buyer,
    \item \( \eta_t = 0 \) indicates that both agents have an equal chance to be a seller or a buyer.
\end{itemize}
The wealthier agent is almost surely a seller if \(\eta_t \to \infty\) and a buyer if \(\eta_t \to -\infty\). Although Model T is simple, it is difficult to derive theoretical results since $\eta_t,\ \mu_i(t)$ and $k_t$ can vary over time. However, we can obtain theoretical results under the following assumption:
\begin{equation}\label{A1}
    \eta_t = \eta, \ \mu_i(t) = \mu \ \text{and} \ k_t = k \text{ for some nonzero constants } \eta, \mu \text{ and } k.
\end{equation}

\section{Main results}
Under assumption~\eqref{A1} and \(0 < |\mu| < 1\), we derive the inequality \( |\mu(1 + k\eta/2)| < 1 \), which ensures that all individuals in all connected social graphs reach the stable equal wealth state \( 0 \in \mathbb{R}^n \) as \( n \to \infty \). This inequality applies both when the wealthier agent is more likely to be a seller and when they are more likely to be a buyer. Additionally, there exists a connected social graph with an unstable equal wealth state if \( |\mu(1 + k\eta/2)| > 1 \). The condition \( |\mu| < 1 \) indicates that an individual's wealth decreases over time. Specifically, for individuals without debt, their wealth diminishes, while for those with debt, their debt decreases. Depreciation could be a possible condition that results in \( |\mu| < 1 \). We validate in Section~\ref{sec:Properties of Model T} that the stable equal wealth state is \( 0 \in \mathbb{R}^n \) for any initial wealth distribution. 

\begin{theorem}\label{Thm:critical upper bound} 
    Under assumption~\eqref{A1} and $|\mu|\in (0,1)$, $|1+k\eta/2|$ is a critical lower bound for $1/|\mu|$ to obtain the stable equal wealth state \(0 \in \mathbb{R}^n\) as \(n \to \infty\) for all connected social graphs, i.e.,
    \begin{itemize}
    \item $0 \in \mathbb{R}^n$ is stable as $n \to \infty$ for all connected social graphs if $|1+k\eta/2|<1/|\mu|$, and
    \item there is some connected social graph with an unstable equal wealth state as $n \to \infty$ when $|1+k\eta/2|>1/|\mu|$.
    \end{itemize}
\end{theorem}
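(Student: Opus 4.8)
The plan is to analyze local stability of the equilibrium $\mathbf{0}$ for the \emph{expected} one-step dynamics, convert this into a spectral-radius condition on a Laplacian matrix, and then extremize that condition over all connected graphs on $[n]$ as $n\to\infty$. First I would compute the conditional expectation of the update. Fixing the state at time $t$ and conditioning on which of the $|E|$ edges is selected (each with probability $1/|E|$), the only agents whose expected wealth differs from $\mu w_i(t)$ are the two endpoints of the chosen edge. Using $Q_{ij}(t)+Q_{ji}(t)=1$ together with the identity
\[
Q_{ij}(t)-Q_{ji}(t)=2Q_{ij}(t)-1=\tanh\!\big(\tfrac{\eta}{2}(w_i(t)-w_j(t))\big),
\]
the seller/buyer contributions of $\pm k$ collapse into a single tanh term, giving
\[
\mathbf{E}[w_i(t+1)\mid \mathcal{F}_t]=\mu\,w_i(t)+\frac{\mu k}{|E|}\sum_{j\in N_i}\tanh\!\big(\tfrac{\eta}{2}(w_i(t)-w_j(t))\big).
\]
In particular $\mathbf{0}$ is a fixed point of this map, and every equal-wealth configuration $c\mathbf{1}$ is sent to $\mu c\mathbf{1}$; since $|\mu|<1$ these contract toward $\mathbf{0}$, explaining why the attracting equal-wealth state is $\mathbf{0}$.

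Next I would linearize at $\mathbf{0}$. Since $\tanh'(0)=1$ and $\sum_{j\in N_i}(w_i-w_j)=(\mathscr{L}\mathbf{w})_i$, the Jacobian of the expected-dynamics map at $\mathbf{0}$ is
\[
M=\mu\Big(I+\frac{k\eta}{2|E|}\,\mathscr{L}\Big),
\]
so $\mathbf{0}$ is locally asymptotically stable precisely when the spectral radius satisfies $\rho(M)<1$. Writing $0=\lambda_1\le\cdots\le\lambda_n$ for the eigenvalues of $\mathscr{L}$, the eigenvalues of $M$ are $\mu\big(1+\tfrac{k\eta}{2|E|}\lambda_i\big)$. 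Because $t\mapsto|1+k\eta\,t|$ is convex, its maximum over $t\in\{\lambda_i/(2|E|)\}\subseteq[0,\lambda_n/(2|E|)]$ is attained at an endpoint, so $\rho(M)=|\mu|\,\max\!\big(1,\ |1+\tfrac{k\eta}{2|E|}\lambda_n|\big)$. As $1<1/|\mu|$, the constant $1$ never binds, and stability is equivalent to $\big|1+\tfrac{k\eta}{2|E|}\lambda_n\big|<1/|\mu|$.

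The crux is then a purely spectral-graph-theoretic optimization of $\lambda_n/(2|E|)$ over connected graphs. I would invoke the standard bound $\lambda_n\le n$ and, for connected graphs, $|E|\ge n-1$, so that $\lambda_n/(2|E|)\le n/(2(n-1))$, with equality for the star $K_{1,n-1}$, where $\lambda_n=n$ and $|E|=n-1$. Since $n/(2(n-1))\to 1/2$, convexity yields $\sup_G\big|1+\tfrac{k\eta}{2|E|}\lambda_n\big|\to\max(1,|1+k\eta/2|)$ as $n\to\infty$. Hence if $|1+k\eta/2|<1/|\mu|$ then $\rho(M)<1$ for every connected graph, giving the stable state $\mathbf{0}$; whereas if $|1+k\eta/2|>1/|\mu|>1$, then for the star and $n$ large the $\lambda_n$-eigenvalue of $M$ exceeds $1$ in modulus, producing an unstable mode and hence an unstable equal-wealth state.

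The main obstacle is twofold. Routine but essential is the graph optimization and its uniformity in $n$: verifying that the star is the extremizer of $\lambda_n/(2|E|)$ and treating the sign of $\eta$ with care, since for $\eta<0$ the map $t\mapsto|1+k\eta t|$ first decreases before increasing, so the endpoint analysis must be checked rather than assumed monotone. The more delicate point is justifying that the deterministic criterion $\rho(M)<1$ genuinely captures the intended notion of a \emph{stable}, respectively \emph{unstable}, equal-wealth state of the underlying stochastic process, i.e.\ relating the fixed point of the conditional-expectation map to the long-run behaviour established in Section~\ref{sec:Properties of Model T}.
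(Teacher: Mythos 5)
Your proposal is correct and follows essentially the same route as the paper: a mean-field/conditional-expectation update, linearization at the equal-wealth state giving the Jacobian $\mu\bigl(I+\tfrac{k\eta}{2|E|}\mathscr{L}\bigr)$ (the paper's Lemma~\ref{lemma:eigenvalues of the Jacobian}), the spectral-radius criterion $|\mu|\bigl(1\vee|1+\tfrac{k\eta}{2|E|}\lambda_1|\bigr)<1$ (Lemma~\ref{lemma:stability of an equal wealth society}), and the extremization of $\lambda_1/|E|$ over connected graphs via $\lambda_1\le n$, $|E|\ge n-1$, with the star as the extremal case. The caveat you raise about relating the deterministic criterion to the stochastic process is fair, but the paper itself proceeds at the same level of rigor.
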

The light-blue region in Figure~\ref{fig:stability} satisfies \( |\mu(1 + k\eta/2)| < 1 \). The equation \( |\mu(1 + k\eta/2)| = 1 \) separates the phase transition between a stable equal wealth state on all connected social graphs and an unstable equal wealth state on some connected social graph as $n\to\infty$. Observe that in both cases, where a wealthier agent is likelier to be a seller or a buyer, the larger the transaction amount is, the smaller \( |\mu| \) must be to achieve a stable equal wealth state on all connected social graphs. Moreover, the area of a stable equal wealth state is larger when $\eta < 0$ than when $\eta > 0$, which means that it is more likely to achieve a stable equal wealth state when the wealthier agent is more likely to be a buyer than a seller. We have demonstrated in Lemma~\ref{lemma:stability of an equal wealth society} that \( |\mu| \left(1 \vee \left|1 + \frac{k\eta}{2|E|}\lambda_1\right|\right) = 1 \) is the equation for each social graph that separates the phase transition between a stable equal wealth state and an unstable equal wealth state, where \(\lambda_1\) is the largest eigenvalue of the Laplacian of \(G\). This equation also depends on the social graph.

\begin{figure}[H]
    \centering
    \includegraphics[width=0.5\textwidth]{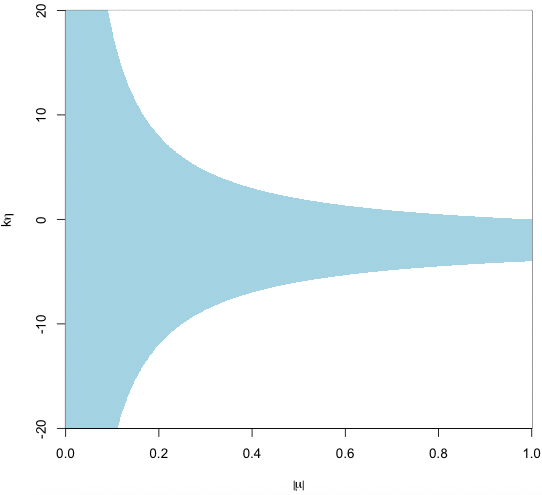}
    \caption{Region of $|\mu(1+k\eta/2)|<1$}
    \label{fig:stability}
\end{figure}

\section{Properties of Model T}
\label{sec:Properties of Model T}
\begin{lemma}\label{lemma:equal wealth}
    Under assumption~\eqref{A1}, all agents' wealth approaches $$\frac{\lim_{t\to\infty}\mu^t}{n}\sum_{i\in [n]}w_i(0)\ \hbox{if an equal wealth society is achieved.}$$
\end{lemma}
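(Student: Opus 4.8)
The plan is to reduce everything to a single scalar quantity, the total wealth $S(t):=\sum_{i\in[n]}w_i(t)$, and to show that it evolves by a deterministic recursion even though the exchange dynamics are random. First I would fix an arbitrary time $t$ and an arbitrary outcome of the step: a single edge $(i,j)$ is selected, one endpoint acts as seller and the other as buyer, and every other vertex simply has its wealth scaled by $\mu$. Summing the update rule over all agents, the seller contributes $\mu(w_i(t)+k)$ and the buyer contributes $\mu(w_j(t)-k)$, so the $+k$ and $-k$ cancel, while the common factor $\mu$ can be pulled out of the whole sum. This yields the pathwise identity
\begin{equation*}
S(t+1)=\mu\,S(t),
\end{equation*}
which holds for every realization of the randomness. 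This is the heart of the argument, and it is exactly where assumption~\eqref{A1} is used: the same $\mu$ and $k$ for all agents and all times are what make the cancellation and factorization work.

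Iterating the recursion gives $S(t)=\mu^{t}S(0)$, and dividing by $n$ yields the closed form for the average wealth,
\begin{equation*}
\bar w(t):=\frac1n\sum_{i\in[n]}w_i(t)=\frac{\mu^{t}}{n}\sum_{i\in[n]}w_i(0),
\end{equation*}
valid for every $t$.

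To finish, suppose an equal wealth society is achieved, so that each $w_i(t)$ approaches a common value $L$ as $t\to\infty$. Then their arithmetic mean also approaches $L$, i.e.\ $\bar w(t)\to L$. Comparing with the closed form above identifies
\begin{equation*}
L=\lim_{t\to\infty}\bar w(t)=\frac{\lim_{t\to\infty}\mu^{t}}{n}\sum_{i\in[n]}w_i(0),
\end{equation*}
which is the asserted limit.

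The conservation identity itself is a one-line computation, so I do not expect a genuine obstacle there. The only steps needing care are conceptual: interpreting precisely what ``equal wealth is achieved'' means, transferring coordinatewise convergence to convergence of the average, and observing that this convergence is what guarantees $\lim_{t\to\infty}\mu^{t}$ exists in the relevant regime (for instance $|\mu|<1$, where the limit is $0$), so that the right-hand side is well defined.
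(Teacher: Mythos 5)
Your proposal is correct, and the skeleton is the same as the paper's: show that the total wealth is multiplied by $\mu$ at each step, iterate to get $\mu^{t}$ times the initial total, and then use the definition of an equal wealth society to identify the common limit with the limit of the average. The one genuine difference is where the cancellation of $+k$ and $-k$ takes place. You observe it pathwise: for every realization of the random edge selection and of the seller/buyer roles, the seller gains $k$ and the buyer loses $k$ before the common factor $\mu$ is applied, so $S(t+1)=\mu S(t)$ holds surely and the Fermi function never enters. The paper instead works with the mean-field approximation of the dynamics (which is the deterministic map it analyzes throughout, e.g.\ for the Jacobian in Lemma~\ref{lemma:eigenvalues of the Jacobian}): it writes the expected update of $w_i$ in terms of $Q_{ij}$ and $Q_{ji}$, and the cancellation appears only after summing over $i$ and using $Q_{ij}+Q_{ji}=1$, giving $\sum_i w_i^\star=\mu\sum_i w_i$ for the mean-field iterates. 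Your version is more elementary and in a sense stronger (exact conservation up to the factor $\mu$ for the actual stochastic process, not just in expectation), while the paper's version has the advantage of being the statement actually needed for the mean-field framework in which the stability results are phrased. Your closing remarks about transferring coordinatewise convergence to the average and about the existence of $\lim_{t\to\infty}\mu^{t}$ are appropriate and are points the paper passes over silently.
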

\begin{proof}
Letting $d_i=|N_i|$, $Q_{ij}=Q_{ij}(t)$, $w_i^\star=w_i(t+1)$ and $w_i=w_i(t).$ Via mean field approximation,
\begin{align*}
     w_i^\star&=(1-\frac{d_i}{|E|})\mu w_i+\frac{\mu}{|E|}\sum_{j\in N_i}\big[Q_{ij}(w_i+k)+Q_{ji}(w_i-k)\big]\\
    &=(1-\frac{d_i}{|E|})\mu w_i+\frac{\mu}{|E|}\sum_{j\in N_i}\big[w_i+k(2Q_{ij}-1)\big]\ \hbox{for all}\ i\in [n].
\end{align*}
Therefore, 
\begin{align*}
    \sum_{i\in [n]}w_i^\star&=\frac{\mu}{|E|}\sum_{i\in [n]}(|E|-d_i)w_i+\frac{\mu}{|E|}\bigg(\sum_{i\in [n]}d_iw_i+2k|E|-k2|E|\bigg)\\
    &=\frac{\mu}{|E|}\bigg(|E|\sum_{i\in [n]}w_i-\sum_{i\in [n]}d_iw_i+\sum_{i\in [n]}d_iw_i\bigg)=\mu\sum_{i\in [n]}w_i.
\end{align*}
Letting $W_t=\sum_{i\in [n]}w_i(t)/n$, we get
\begin{align*}
    W_{t+1}&=\mu W_t=\mu^{t+1}W_0\to W_0\lim_{t\to\infty}\mu^t\ \hbox{as}\ t\to\infty.
\end{align*}
This implies everyone's wealth approaches $$\frac{\lim_{t\to\infty}\mu^t}{n}\sum_{i\in [n]}w_i(0)\ \hbox{if an equal wealth society is achieved.}$$ 
\end{proof}

\begin{lemma}[\cite{das2003improved}]\label{lemma:largest eigenvalue of a Laplacian}
    The largest eigenvalue $\lambda_1$ of a Laplacian of the undirected simple graph $G=([n],E)$ satisfies $\lambda_1\leq \max\{|N_i\cup N_j|: (i,j)\in E\}.$
   
\end{lemma}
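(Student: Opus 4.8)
The plan is to prove this purely spectral statement by passing from the vertex Laplacian to its edge (line-graph) counterpart, which is the natural setting for edge-indexed bounds of this type. Fix an arbitrary orientation of $G$ and let $B$ be the $n\times|E|$ oriented incidence matrix, so that $\mathscr{L}=BB^{\mathsf T}$. Since $BB^{\mathsf T}$ and $B^{\mathsf T}B$ share their nonzero eigenvalues, $\lambda_1$ equals the largest eigenvalue of the $|E|\times|E|$ matrix $M:=B^{\mathsf T}B=2I+R$, where $R$ is the signed adjacency matrix of the line graph: $R_{ef}=\pm1$ when the edges $e,f$ share exactly one endpoint and $R_{ef}=0$ otherwise. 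This reformulation is attractive because the diagonal of $M$ is the constant $2$ and all of the combinatorics is pushed into $R$.

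First I would record the baseline bound. Applying Gershgorin's theorem to $M$, the disc centred at the row of an edge $e=(i,j)$ has radius equal to the number of edges adjacent to $e$, namely $(d_i-1)+(d_j-1)$ with $d_i=|N_i|$ (these are all distinct since in a simple graph no other edge shares both endpoints of $e$); hence $\lambda_1\le 2+(d_i-1)+(d_j-1)=d_i+d_j$ for the worst edge, which is the classical Anderson--Morley bound. It then remains to improve $d_i+d_j$ to $|N_i\cup N_j|=d_i+d_j-|N_i\cap N_j|$, i.e.\ to save one unit for every common neighbour of the endpoints of the extremal edge.

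The mechanism for this saving is the sign structure of $R$ along triangles. Orienting the edges so that every edge incident to $i$ points outward and $e=(i,j)$ points from $i$ to $j$, a common neighbour $k\in N_i\cap N_j$ produces a triangle $\{i,j,k\}$ whose three incident edges $(i,j),(i,k),(j,k)$ have pairwise $R$-entries with product $-1$; in particular the two edges $(i,k),(j,k)$ enter the $e$-row of $M$ with opposite signs. I would then estimate $\lambda_1$ through the Rayleigh quotient $\lambda_1=\max_{\|z\|=1}z^{\mathsf T}Mz=\max_{\|z\|=1}\|Bz\|^2$, grouping for the extremal edge the two triangle-edges of each common neighbour so that their opposite signs produce genuine cancellation, thereby subtracting $|N_i\cap N_j|$ from the effective radius and, after restoring the diagonal $2$, yielding $\lambda_1\le\max\{|N_i\cup N_j|:(i,j)\in E\}$.

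The main obstacle is precisely this last step: a naive entrywise Gershgorin estimate—or equivalently replacing $M$ by the nonnegative matrix $2I+|R|$—discards the signs and recovers only $d_i+d_j$, so the common-neighbour cancellation must be captured by a global rather than a row-by-row argument. The delicate points are choosing a single orientation under which every triangle at the extremal edge contributes a cancellation of definite sign, and controlling the cross-terms between distinct common neighbours; making this rigorous is where the real work lies. Alternatively, since the inequality is exactly the result of Das, one may simply verify the incidence reduction $\mathscr{L}=BB^{\mathsf T}$ above and invoke the cited statement directly.
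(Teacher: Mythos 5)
The paper offers no proof of this lemma at all: it is quoted directly from Das \cite{das2003improved}, which is exactly your closing fallback option, so that part of your proposal is fine (and the identity $\mathscr{L}=BB^{\mathsf T}$ is not even needed merely to invoke the citation). The problem is with your primary argument. The reduction to $M=B^{\mathsf T}B=2I+R$ and the Gershgorin estimate $\lambda_1\le\max_{(i,j)\in E}(d_i+d_j)$ are correct, but everything beyond this Anderson--Morley bound is missing, and the entire content of the lemma is precisely the further subtraction of $|N_i\cap N_j|$. Your sketch replaces that step by the phrases ``grouping \dots so that their opposite signs produce genuine cancellation'' and ``subtracting $|N_i\cap N_j|$ from the effective radius,'' while conceding that ``making this rigorous is where the real work lies.'' That concession is accurate, and the gap is not cosmetic: Gershgorin discs depend only on $|R_{ef}|$, so no choice of orientation can make them see the signs; and in the Rayleigh quotient $z^{\mathsf T}Mz=2\|z\|^2+\sum_{e\neq f}R_{ef}z_ez_f$ the two terms attached to a common neighbour $k$ of $i,j$ (with $f=(i,k)$, $g=(j,k)$) have opposite \emph{matrix entries} but not opposite \emph{summands}: for an adversarial $z$ the products $z_ez_f$ and $z_ez_g$ can take whatever signs make both contributions positive, and then nothing cancels. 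So the claimed saving of one unit per common neighbour is asserted rather than proved, and the cross-terms between distinct common neighbours, which you yourself flag, are never controlled.

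A smaller issue: the orientation you pick (``every edge incident to $i$ points outward'') is adapted to a single edge $e=(i,j)$, whereas a Rayleigh-quotient bound is global and must be run with one fixed orientation of all of $G$. Since re-orienting an edge only conjugates $M$ by a diagonal $\pm1$ matrix this is repairable, but as written the argument silently re-orients the graph for each candidate extremal edge. In summary, the first two thirds of your write-up correctly re-derive the weaker bound $d_i+d_j$, the decisive improvement to $|N_i\cup N_j|$ is a genuine gap, and the honest version of your proof is your final sentence---cite Das---which is exactly what the paper does.
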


\begin{lemma}\label{lemma:eigenvalues of the Jacobian}
    Under assumption~\eqref{A1}, let $J$ be the Jacobian matrix of $w$ during an equal wealth society,  $\mathscr{L}$ be a Laplacian of graph $G$ and $a=\frac{k\eta}{2|E|}$. Then, $v$ is the eigenvector of $\mathscr{L}$ corresponding to eigenvalue $b$ $\iff$ $v$ is the eigenvector of $J$ corresponding to eigenvalue $\mu(1+ab)$.   
\end{lemma}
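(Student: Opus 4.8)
The plan is to realize the mean-field dynamics as a fixed map \( F:\mathbb{R}^n\to\mathbb{R}^n \), \( w^\star = F(w) \), and then linearize it at an equal-wealth configuration. Starting from the update derived in Lemma~\ref{lemma:equal wealth} and using that \(\sum_{j\in N_i} w_i = d_i w_i\), the per-coordinate map collapses to
\[
F_i(w) = \mu w_i + \frac{\mu k}{|E|}\sum_{j\in N_i}\bigl(2Q_{ij}-1\bigr),
\]
where \( Q_{ij} = (1+\exp[-\eta(w_i - w_j)])^{-1} \). I would then form the Jacobian \( J = (\partial F_i/\partial w_l) \) evaluated on the diagonal set \(\{w_1=\cdots=w_n\}\), on which every difference \( w_i - w_j \) vanishes and hence \( Q_{ij}=1/2 \).

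The key computation is the derivative of the Fermi function. Writing \( u = w_i - w_j \), one has \( dQ_{ij}/du = \eta\, Q_{ij}(1-Q_{ij}) \), which at equal wealth equals \(\eta/4\); thus \(\partial Q_{ij}/\partial w_i = \eta/4\) and \(\partial Q_{ij}/\partial w_j = -\eta/4\) at the equal-wealth state, while \(\partial Q_{ij}/\partial w_l = 0\) for \( l\notin\{i,j\} \). Feeding these into \( F_i \) and setting \( a = k\eta/(2|E|) \) gives the diagonal entry \( J_{ii} = \mu(1+a d_i) \), the neighbor entries \( J_{il} = -\mu a \) for \( l\in N_i \), and \( J_{il}=0 \) otherwise.

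Recognizing that these are precisely the entries of \( \mu I + \mu a\mathscr{L} \) — since \( \mathscr{L} = \diag(d_1,\dots,d_n) - A \) — I would conclude \( J = \mu(I + a\mathscr{L}) \). Because \( J \) is an affine function of \( \mathscr{L} \), the two matrices share eigenvectors, and \( \mathscr{L}v = bv \) immediately yields \( Jv = \mu(1+ab)v \). For the converse I would divide out, using that \( \mu\neq 0 \) and \( a\neq 0 \) under assumption~\eqref{A1}, to recover \( \mathscr{L}v = bv \) from \( Jv = \mu(1+ab)v \); this is exactly where the nonvanishing of the constants is needed.

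I expect the only delicate point to be the bookkeeping in the linearization: correctly applying the chain rule to \( Q_{ij} \) (the opposite signs for \(\partial_{w_i}\) and \(\partial_{w_j}\)) and confirming that the \(\mu w_i\) term together with the neighbor sum recombines into the Laplacian pattern rather than an extra multiple of the identity. Once the identity \( J = \mu(I+a\mathscr{L}) \) is established, the eigenvector/eigenvalue equivalence is essentially immediate.
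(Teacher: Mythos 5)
Your proposal is correct and follows essentially the same route as the paper: linearize the mean-field update at the equal-wealth state using $\partial Q_{ij}/\partial w_i=\eta/4$, obtain $J=\mu(I+a\mathscr{L})$, and read off the eigenvalue correspondence. Your explicit remark that the converse direction uses $\mu\neq 0$ and $a\neq 0$ (guaranteed by assumption~\eqref{A1}) is a small point the paper leaves implicit, but the argument is the same.
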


\begin{proof}
Letting $d_i=|N_i|$, $w_i=w_i(t)$, $w_i^\star=w_i(t+1)$ and $Q_{ij}=Q_{ij}(t).$ Under an equal wealth society, we derive
$\partial Q_{ij}/\partial w_i=\eta/4$ and $\partial Q_{ij}/\partial w_j=-\eta/4$, therefore
\begin{align*}
    &\frac{\partial w_i^\star}{\partial w_i}=(1-\frac{d_i}{|E|})\mu+\frac{\mu}{|E|}\bigg[1+2k\frac{\eta}{4}\bigg]d_i=\mu\bigg[1+\frac{k\eta d_i}{2|E|}\bigg],\\
    &\frac{\partial w_i^\star}{\partial w_j}=-\frac{\mu k\eta}{2|E|}\ \hbox{for}\ j\in N_i\ \hbox{and}\ 0\ \hbox{for}\ j\notin N_i\cup \{i\}.
\end{align*}
It turns out from $J_{ij}=\partial w_i^\star/\partial w_j$ that $$J-\lambda I=\mu(a\mathscr{L}+I)-\lambda I=\mu a\big[\mathscr{L}+\frac{1}{a}(1-\frac{\lambda}{\mu})I\big]$$
for $I\in\mathbb{R}^{n\times n}$ the identity matrix. Thus, $v$ is the eigenvector of $\mathscr{L}$ corresponding to eigenvalue $b=-(1-\lambda/\mu)/a$ $\iff$ $v$ is the eigenvector of $J$ corresponding to $\lambda=\mu(1+ab).$  
\end{proof}

Since a Laplacian is positive semidefinite, the dominant eigenvalue of $J$ is $|\mu|(1\vee|1+a\lambda_1|)$. Via stability test, we get Lemma~\ref{lemma:stability of an equal wealth society}.

\begin{lemma}\label{lemma:stability of an equal wealth society}
    Under assumption~\eqref{A1}, an equal wealth society is stable if $$|\mu|(1\vee|1+\frac{k\eta}{2|E|}\lambda_1|)<1,\quad \hbox{and unstable if}\quad |\mu|(1\vee|1+\frac{k\eta}{2|E|}\lambda_1|)>1.$$
\end{lemma}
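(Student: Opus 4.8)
The plan is to view the mean-field update $w\mapsto w^\star$ from the proof of Lemma~\ref{lemma:equal wealth} as a discrete-time dynamical system, whose equal wealth configurations are its fixed points, and then to apply the standard linear stability test for maps: a fixed point is (asymptotically) stable when the spectral radius $\rho(J)$ of the Jacobian is strictly below $1$, and unstable when some eigenvalue of $J$ has modulus exceeding $1$, i.e.\ when $\rho(J)>1$. Since Lemma~\ref{lemma:eigenvalues of the Jacobian} already diagonalizes $J$ against the Laplacian, the only real work left is to compute $\rho(J)$ and to identify it with $|\mu|(1\vee|1+a\lambda_1|)$, where $a=\frac{k\eta}{2|E|}$.

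First I would record, from Lemma~\ref{lemma:eigenvalues of the Jacobian}, that the eigenvalues of $J$ are precisely $\mu(1+ab)$ as $b$ runs over the spectrum of $\mathscr{L}$, so that $\rho(J)=|\mu|\max_b|1+ab|$. Because $\mathscr{L}$ is positive semidefinite and $G$ is connected, its eigenvalues satisfy $0=\lambda_n\le\cdots\le\lambda_1$, and both endpoints $b=0$ (via the all-ones eigenvector) and $b=\lambda_1$ are genuinely attained. The function $b\mapsto|1+ab|$ is convex, being the absolute value of an affine map, so its maximum over any set of points contained in $[0,\lambda_1]$ and including the two endpoints is reached at $b=0$ or at $b=\lambda_1$. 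Evaluating there gives $\max_b|1+ab|=\max(1,|1+a\lambda_1|)=1\vee|1+a\lambda_1|$, and hence $\rho(J)=|\mu|(1\vee|1+a\lambda_1|)$, which is exactly the left-hand side of the stated dichotomy.

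Feeding this into the stability test finishes the argument: when $|\mu|(1\vee|1+a\lambda_1|)<1$ the linearization is a contraction about the fixed point, so the equal wealth society is stable, whereas when $|\mu|(1\vee|1+a\lambda_1|)>1$ there is an expanding eigendirection, so it is unstable. The step that needs the most care is the convexity/extreme-point reduction collapsing the maximum over the entire Laplacian spectrum down to the two eigenvalues $0$ and $\lambda_1$; this is precisely where positive semidefiniteness and the zero eigenvalue coming from connectedness are used, and it handles uniformly both signs of $a$ (equivalently, both the wealthier-as-seller and wealthier-as-buyer regimes). Everything else is a direct substitution requiring no further hypotheses.
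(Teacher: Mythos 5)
Your proposal is correct and follows essentially the same route as the paper: the paper's proof is exactly the remark that, by Lemma~\ref{lemma:eigenvalues of the Jacobian} and positive semidefiniteness of the Laplacian, the dominant eigenvalue of $J$ is $|\mu|(1\vee|1+\frac{k\eta}{2|E|}\lambda_1|)$, followed by the linear stability test. You merely make explicit the step the paper leaves implicit (the convexity/endpoint argument showing the maximum of $|1+ab|$ over the spectrum of $\mathscr{L}$ is attained at $b=0$ or $b=\lambda_1$), which is a welcome clarification but not a different argument.
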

It turns out that $|\mu|$ must be less than 1 if $|\mu|(1\vee|1+\frac{k\eta}{2|E|}\lambda_1|)<1.$ Via Lemma~\ref{lemma:equal wealth}, all agents' wealth approaches 0 over time.

\begin{proof}[\bf Proof of Theorem~\ref{Thm:critical upper bound}]
    Using Lemmas~\ref{lemma:equal wealth} and~\ref{lemma:stability of an equal wealth society}, \(0 \in \mathbb{R}^n\) is the stable equal wealth state if \(|\mu|(1 \vee |1+\frac{k\eta}{2|E|}\lambda_1|) < 1\). The condition \(|\mu|(1 \vee |1+\frac{k\eta}{2|E|}\lambda_1|) < 1\) implies that \( |\mu| < 1 \) and \( -\frac{1}{|\mu|} < 1 + \frac{k\eta}{2|E|}\lambda_1 < \frac{1}{|\mu|} \). From Lemma~\ref{lemma:largest eigenvalue of a Laplacian} and the assumption that the social graph is connected, it follows that

\[
\liminf_{n\to\infty}\left(1+\frac{k\eta}{2|E|}\lambda_1\right) \geq \liminf_{n\to\infty}\left(1+\frac{k\eta}{2(n-1)}n\right) = 1+\frac{k\eta}{2} \quad \text{if} \quad \eta < 0,
\]

\[
\limsup_{n\to\infty}\left(1+\frac{k\eta}{2|E|}\lambda_1\right) \leq \limsup_{n\to\infty}\left(1+\frac{k\eta}{2(n-1)}n\right) = 1+\frac{k\eta}{2} \quad \text{if} \quad \eta > 0.
\]

Observe that \(\lambda_1/|E| = n/(n-1)\) when the social graph is a star, so

\[
\lim_{n\to\infty}\left(1+\frac{k\eta}{2|E|}\lambda_1\right) = 1+\frac{k\eta}{2}.
\]

Thus, by Lemma~\ref{lemma:stability of an equal wealth society}, the proof is complete.

\end{proof}

\section{Statements and Declarations}
\subsection{Competing Interests}
The author is funded by NSTC grant.

\subsection{Data availability}
No associated data was used.

\end{document}